\title[GK dimensions of simple modules over twisted group algebras $k \ast A$]{Gelfand-Kirillov dimensions of simple modules over twisted group algebras $k \ast A$}
\author[Ashish Gupta]{Ashish Gupta$^1$}
\address{$^1$Department of Mathematics, 
Ramakrishna Mission Vivekananda Educational and Research Institute (RKMVERI), Belur Math, Howrah, Box: 711202, 
West Bengal, India.}
\email{a0gupt@gmail.com}
\author[Umamaheswaran Arunachalam]{Umamaheswaran Arunachalam$^2$}
\address{$^2$Department of Mathematics,
              Harish-Chandra Research Institute (HRI),
              Chhatnag Road, Jhunsi,
              Prayagraj (Allahabad), Box: 211019, Utter Pradesh, India}
\email{ruthreswaran@gmail.com}
\subjclass[2010]{16D60, 16P90, 16S35, 16S80}
\keywords{Twisted group algebra, Gelfand–Kirillov dimension, Quantum torus, Quantum polynomial, Simple modules}
\newtheoremstyle{theorem}
  {10pt}		  
  {10pt}  
  {\sl}  
  {\parindent}     
  {\bf}  
  {. }    
  { }    
  {}     
\theoremstyle{theorem}
\newtheorem{theorem}{Theorem}
\newtheorem{corollary}[theorem]{Corollary}
\newtheorem{lemma}[theorem]{Lemma}
\newtheorem{proposition}[theorem]{Proposition}
\newtheorem{remark}[theorem]{Remark}
\newtheorem{example}[theorem]{Example}
\newtheorem{question}[theorem]{Question}
\newtheorem*{theorem_1}{Theorem 1}
\newtheorem*{theorem_2}{Theorem 2}
\DeclareMathOperator{\gk}{\mathscr{GK}}
\DeclareMathOperator{\aut}{Aut}
\DeclareMathOperator{\rk}{rk}
\DeclareMathOperator{\End}{End}
\newtheoremstyle{defi}
  {10pt}		  
  {10pt}  
  {\rm}  
  {\parindent}     
  {\bf}  
  {. }    
  { }    
  {}     
\theoremstyle{defi}
\newtheorem{definition}[theorem]{Definition}
\newtheoremstyle{defi}
{10pt} 
{10pt} 
{\rm} 
{\parindent} 
{\bf} 
{. } 
{ } 
{} 
\theoremstyle{defi}
\begin{document}

\maketitle

\begin{abstract}
For the $n$-dimensional multiparameter quantum torus algebra $\Lambda_{\mathfrak q}$ over a field $k$ defined by a multiplicatively antisymmetric matrix $\mathfrak q = (q_{ij})$ we show that in the case when the torsion-free rank of the subgroup of $k^\times$ generated by the $q_{ij}$ is large enough there is a characteristic set of values (possibly with gaps) from $0$ to $n$ that can occur as the Gelfand--Kirillov dimension of simple modules. The special case when $\mathrm{K}.\dim(\Lambda_{\mathfrak q}) = n - 1$ and $\Lambda_{\mathfrak q}$ is simple studied in  A.~Gupta, {\it $\uppercase{\mbox{GK}}$-dimensions of simple modules over $K[X^{\pm 1}, \sigma]$}, Comm. Algebra, {\bf 41(7)} (2013), 2593--2597 is considered without assuming simplicity and it is shown that a dichotomy still holds for the GK dimension of simple modules.
\end{abstract}

\section{Introduction}
\label{sect-1}
Let $k$ be a field and let $k^\times$ denote the group $k \setminus \{0\}$. Let $\mathfrak q$ be an $n \times n$ multiplicatively antisymmetric matrix with entries in $k^\times$. This means that the entries (called \emph{multi-parameters}) $q_{ij}$ of $\mathfrak q$ satisfy $q_{ii} = 1$ and $q_{ji} = q_{ij}^{-1}.$
The \emph{$n$-dimensional quantum torus} $\Lambda_{\mathfrak q}$ is the associative algebra generated over the field $k$ by the variables $X_1,\cdots, X_n$ together with their inverses subject to the relations 
\begin{align}\label{fund_relations}
X_i X_j &= q_{ij}X_j X_i,    \ \ \ \ \ \ \  \ \ \ \forall 1 \le i, j \le n . \ \ \ 
\end{align}

     These algebras play an important role in noncommutative geometry~\cite{MN:1991} and have found applications in the representation theory of torsion-free nilpotent \\ groups~\cite{BS1:2000}. For example, if $H$ is a finitely generated and torsion-free nilpotent group of class two with center $\zeta H$ then the central localization $kH(k \zeta H \setminus \{0\})^{-1}$ is an algebra of this type.  

While many different aspects including irreducible and projective modules~\cite{AM1:1995,AM2:1995,AM3:1996,AM4:1996,AM6:1998,AM7:1999,AM8:1999} automorphisms and derivations~\cite{AC:1992,AM5:1997,AW:2001,NB:2008,OP:1995} Krull and global dimensions~\cite{AG:1992,BS1:2000,MP:1988, AG3:2016} etc. of these algebras have been studied in the recent past (a survey appears in \cite{AM9:2008}) the focus of the present article is Gelfand--Kirillov dimensions of simple modules over the  quantum tori.

As is well-known the Gelfand--Kirilov dimension (GK dimension) is an important invariant of non-commutative algebras and their modules. For modules over the algebras we are studying the GK dimension is related to another important invariant for modules, namely, the Krull dimension~\cite{MP:1988}. For further details on the GK dimenion we refer the reader to the excellent references \cite{KL:2000} and \cite{MR:1987}.


 

The structure of $\Lambda_{\mathfrak q}$ and its modules can vary substantially depending on the matrix of multiparameters $\mathfrak q$. For example, if $\mathfrak q$ has entries that generate a subgroup of $k^\times$ of maximal possible (torsion-free) rank, namely, $\frac{n(n -1)}{2}$ then $\lambda_{\mathfrak q}$ is a simple hereditary noetherian domain~\cite{MP:1988} whereas in the case when this same subgroup has rank zero, it is an algebra finitely generated  over its center. Moreover the matrix $\mathfrak q$ is not uniquely determined by the isomorphism class of $\lambda_{\mathfrak q}$~\cite{MP:1988}.

The situation thus calls for having an additional invariant which is convenient enough to work with and at the same time well-suited to our goal of studying the simple modules of quantum torus algebras. This invariant is provided by the Krull dimension of non-commutative rings which in our case coincides with another dimensional invariant, namely, the global dimension~\cite{MP:1988}. 

The simple modules over the quantum torus in the case when it has Krull dimension one were studied in \cite{MP:1988} and also in \cite{AM3:1996, AM7:1999} (for a generic quantum torus). In \cite{AG1:2014} and \cite{AG2:2016} focus was shifted to the case where the Krull dimension is one less than the maximum possible, that is, $n - 1$.
In \cite{AG1:2014} a dichotomy for the GK dimensions of simple $\Lambda_{\mathfrak q}$-modules was established assuming that $\Lambda_{\mathfrak q}$ has Krull dimension $n - 1$ and is itself simple. 
In the current article we show that this dichotomy remains intact even when this simplicity assumption is lifted:

\begin{theorem_1}
Let $\Lambda_{\mathfrak q}$ be an $n$-dimensional quantum torus algebra with Krull dimension $n - 1$. For the GK dimension of a simple $\Lambda_{\mathfrak q}$-module $M$ the following dichotomy holds   
\[\gk (M) = 1, \ \ \   \mathrm{or} \ \ \    \gk (M)  = \gk( \Lambda_{\mathfrak q}) - \gk(\mathcal Z(\Lambda_{\mathfrak q})) - 1,\]
where $\mathcal Z(\Lambda_{\mathfrak q})$ denotes the center of $\Lambda_{\mathfrak q}$.
\end{theorem_1}



In carrying out a study of simple modules over quantum polynomials we often find it useful to impose some kind of an independence assumption on the multiparameters $q_{ij}$. For example, in the consideration of simple modules in \cite{AM3:1996, AM7:1999} it is assumed that the multiparameters are in general position, that is, generate a subgroup in $k^\times$ of maximal rank.

Evidently, we may present an $n$-dimensional quantum torus $\Lambda_{\mathfrak q}$ as a skew Laurent polynomial ring over the subring  $\Lambda'$ generated by the variables $X_1^{\pm 1}, \cdots, X_{n -1}^{\pm 1}$. Thus $\Lambda_{\mathfrak q}  = \Lambda'[X_n^{\pm 1}; \sigma]$ where $\sigma$ is a scalar automorphism of $\Lambda'$ given by $X_i \mapsto p_iX_i$ for $p_i \in k^\times$. In the following we will denote the set of GK dimensions of simple modules over a given quantum torus  algebra $\Lambda_{\mathfrak q}$ by $\mathscr V(\Lambda_{\mathfrak q})$. Our next theorem expresses $\mathscr V(\Lambda_{\mathfrak q})$ in terms of $\mathscr V(\Lambda')$ assuming an independence condition for the multiparameters (see Theorem 2).

\begin{definition}\label{lmda-grp}
(i) The $\lambda$-group $\mathscr G(\Lambda_{\mathfrak q})$ of a quantum torus algebra $\Lambda_{\mathfrak q}$ is defined as the subgroup of $k^\times$ generated by the multi-parameters $q_{ij}$. \\
 \noindent (ii) Given a scalar automorphism $\sigma$ of $\Lambda_{\mathfrak q}$ we define $\mathscr H_\sigma$ as  the subgroup of $k^\times$ generated by the scalars $p_i$ ($i = 1, \cdots, n$). 
\end{definition}

We can now state our second theorem.


\begin{theorem_2}
Let $\Lambda_{\mathfrak{q}}$ be an $n$-dimensional quantum torus algebra and 
consider the skew-Laurent extension \[ \Lambda^\ast_{\mathfrak  q, \sigma} = \Lambda_{\mathfrak q}[Y^{\pm 1} ; \sigma], \] 
where $\sigma \in \aut (\Lambda_{\mathfrak{q}})$ is a scalar automorphism defined by $\sigma(X_i) = p_i X_i$.
Assume that the subgroups  $\mathscr G(\Lambda_{\mathfrak q})$ and  $\mathscr H_\sigma$ of $k^\times$ as in Definition \ref{lmda-grp} intersect trivially. 
Let $\mathscr V (\Lambda_\mathfrak q)$ be the (finite) set of GK dimensions of simple $\Lambda_{\mathfrak q}$-modules and similarly $\mathscr V(\Lambda^\ast_{\mathfrak  q, \sigma})$ the set of GK dimensions of simple $\Lambda^\ast_{\mathfrak  q, \sigma}$-modules. Then 
 \[ \mathscr V(\Lambda^\ast_{\mathfrak  q, \sigma}) \subseteq 
  \{\rk(\mathscr H_\sigma), \cdots, n\} \cup ( \mathscr V(\Lambda_\mathfrak q) + 1).  \]
\end{theorem_2} Here $\mathscr V(\Lambda_\mathfrak q) + 1$ stands for the set $\{u + 1 \mid u \in \mathscr V(\Lambda_\mathfrak q)\}$.
\begin{remark}
Theorem 2 does not necessarily mean that for each value $d$ in the set \[ \{\rk(\mathscr H_\sigma), \cdots, n\} \cup ( \mathscr V(\Lambda_\mathfrak q) + 1) \] there is a simple $\Lambda^\ast_{\mathfrak  q, \sigma}$-module with GK dimension $d$. Rather it points to the general fact that in the case when sufficiently many of the multiparameters are independent there are certain characteristic values in the set $1, \cdots, n$ (possibly with gaps) that can occur as GK dimensions of simple modules over the quantum torus algebra (Example \ref{eg1}).
\end{remark}
 We provide some examples illustrating our two theorems at the end of Section \ref{proof_thm_2}.
This paper completes the story, so to speak, for the GK dimension of simple modules over the quantum torus $\Lambda_{\mathfrak q}$ in the case when $\mathrm{K}.\dim(\Lambda_{\mathfrak q}) = n - 1$ initiated in \cite{AG1:2014} and continued in \cite{AG2:2016}. In \cite{MP:1988} the corresponding problem for $\mathrm{K}.\dim(\Lambda_{\mathfrak q}) = 1$ was considered and answered.
Our work thus naturally leads to the following question.

\begin{question}
Let $\Lambda_{\mathfrak q}$ be an n-dimensional quantum torus algebra with Krull dimension either $2$ or $n - 2$. Does there exist a non-holonomic simple $\Lambda_{\mathfrak q}$-module with GK dimension $n - 1$? If so, how can such a simple module be constructed. What are the possible values of the GK dimension of simple $\Lambda_{\mathfrak q}$-modules. 
\end{question}

\section{The $n$-dimensional quantum torus}\label{qtorus}
In this section we recall some known facts concerning the quantum torus algebras that we shall be needing in the development of our results to follow.

\subsection{Twisted group algebra structure}

As already noted the algebras $\Lambda_{\mathfrak q}$ have the structure of a twisted group algebra $k \ast A$ of a free abelian group of rank $n$ over $k$. We briefly recall this kind of a structure and refer the reader to \cite{PM:1985} for further details. For a given group $G$ a $k$-algebra $R$ is said to be twisted group algebra of $G$ over $k$ if $R$ contains as $k$-basis a copy $\overline{G}: = \{ \bar{g} \mid g \in G \} $ of $G$ such that the multiplication in $R$ satisfies: 
\begin{equation}
\label{t-mult}
\bar g_1 \bar g_2 = \gamma(g_1, g_2)\overline{g_1g_2}
\end{equation}
where $\gamma : G \times G \rightarrow k^{\times}$. The associativity of multiplication means that the \emph{twisting} function $\gamma$ satisfies:
\[ \gamma(x,y)\gamma(xy, z) = \gamma(y, z)\gamma(x, yz) \]
which is the 2-cocycle condition in terms of group cohomology (each twisted group algebra $k \ast G$ arises from an element in $H^2(G, k^\times)$).
For a subgroup $H$ of $G$ the $k$-linear span of $\overline{H} := \{\bar {h} \mid h \in H \}$,  is a sub-algebra of $R$ which is a twisted group algebra of $H$ over $k$ with the defining cocycle being the restriction of $\gamma$ to $H \times H$. This subalgebra will be denoted as $k \ast H$.
In the case $G$ is abelian it is known that the center of $k \ast G$ is of the form $k \ast Z$ for a suitable subgroup $Z \le G$ (e.g \cite[Lemma 1.1]{OP:1995}).

\subsection{Commutative (sub-)twisted group algebras and Krull dimension} \label{Kdim_results}
In the case of the quantum tori $\Lambda_q = k \ast A$  the subgroups $B$ for which the subalgebra $k \ast B$ is commutative play an important role. For example, the following fact  was conjectured in \cite{MP:1988} and shown in \cite{BG1:2000}. 

\begin{theorem}\label{sup-ranks-K.dim}
Given a quantum torus algebra $k \ast A$ the supremum of the ranks of subgroups $B \le A$ such that the corresponding (sub-) twisted group algebra $k \ast B$ is commutative equals both the Krull and the global dimensions of the algebra $k \ast A$.
\end{theorem}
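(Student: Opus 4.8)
The plan is to recast the statement in terms of the alternating bicharacter of $k\ast A$, prove the lower bound by a change-of-rings argument, and prove the sharp upper bound by an induction on $n$ that isolates one genuinely hard sub-statement. Write $k\ast A$ with $k$-basis $\{\bar a:a\in A\}$ and cocycle $\gamma$, and set $\beta(a,b)=\gamma(a,b)\gamma(b,a)^{-1}$, so that $\bar a\bar b=\beta(a,b)\bar b\bar a$ and $\beta\colon A\times A\to k^\times$ is an alternating bicharacter. A direct computation shows that, for a subgroup $B\le A$, the subalgebra $k\ast B$ is commutative exactly when $\beta$ is trivial on $B\times B$; call such a $B$ \emph{$\beta$-isotropic}. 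In particular the radical $A_0=\{a\in A:\beta(a,A)=1\}$ is the largest subgroup with $\overline{A_0}$ central, $k\ast A_0=\mathcal Z(k\ast A)$ (compare \cite[Lemma~1.1]{OP:1995}), and since $\beta|_{A_0\times A_0}$ is trivial this center is an ordinary Laurent polynomial ring in $\rk(A_0)$ variables. Put $s=\sup\{\rk(B):B\le A\ \beta\text{-isotropic}\}$ (finite, since $\rk(B)\le n$); the goal is $\mathrm{K}.\dim(k\ast A)=\mathrm{gl.dim}(k\ast A)=s$.

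For the lower bound, fix a $\beta$-isotropic $B$ of rank $r$. Then $k\ast B$ is a commutative Laurent polynomial ring in $r$ variables, of Krull and global dimension $r$, and I would exhibit it as a bimodule retract of $k\ast A$: a transversal $T$ of $B$ in $A$ gives $k\ast A=\bigoplus_{g\in T}\bar g\,(k\ast B)$, and because $A$ is abelian each $\bar g\,(k\ast B)$ is a $(k\ast B,k\ast B)$-sub-bimodule (since $\bar b\bar g=\beta(b,g)\bar g\bar b\in\bar g(k\ast B)$), so $k\ast B$ is a bimodule direct summand of $k\ast A$ and $k\ast A$ is free as a left and as a right $k\ast B$-module. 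Hence neither dimension can drop: for global dimension, a $k\ast B$-module $M$ with $\mathrm{pd}_{k\ast B}M=r$ induces up to a $k\ast A$-module of projective dimension $\le r$ (flat base change) whose restriction to $k\ast B$ is $\bigoplus_{g}{}^{\psi_g}\!M$ with each $\psi_g$ conjugation by $\bar g$, an automorphism of $k\ast B$, hence of projective dimension still $r$, and restriction along the inclusion $k\ast B\hookrightarrow k\ast A$ does not raise projective dimension because $k\ast A$ is left $k\ast B$-projective; for Krull dimension, $-\otimes_{k\ast B}k\ast A$ is exact, faithful and preserves strict inclusions, so $\mathrm{K}.\dim(k\ast B)\le\mathrm{K}.\dim(k\ast A)$. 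Taking the supremum over $B$ yields $\mathrm{K}.\dim(k\ast A),\ \mathrm{gl.dim}(k\ast A)\ge s$.

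For the upper bound, by \cite{MP:1988} it suffices to bound the Krull dimension, and I would induct on $n$ via the presentation $k\ast A=\Lambda'[X_n^{\pm1};\sigma]$, where $\Lambda'=k\ast A'$, $A'=\langle X_1,\dots,X_{n-1}\rangle$, and $\sigma$ is the scalar automorphism $X_i\mapsto\beta(X_i,X_n)X_i$. Intersecting $\beta$-isotropic subgroups of $A$ with $A'$ shows $s'\le s\le s'+1$, where $s'$ is the analogous supremum for $A'$. Combining the inductive hypothesis $\mathrm{K}.\dim(\Lambda')=s'$, the general bound $\mathrm{K}.\dim(\Lambda'[X_n^{\pm1};\sigma])\le\mathrm{K}.\dim(\Lambda')+1$, and the lower bound just proved, the only thing left is to show that when $s=s'$ — i.e.\ when $X_n$ creates no new isotropic direction — one in fact has $\mathrm{K}.\dim(\Lambda'[X_n^{\pm1};\sigma])=\mathrm{K}.\dim(\Lambda')$. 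Equivalently, localising centrally at $k\ast A_0=\mathcal Z(k\ast A)$ reduces this to a quantum torus over a field with nondegenerate bicharacter — a simple ring by \cite{MP:1988} — for which one must show that the Krull (and global) dimension equals the maximal rank of a $\beta$-isotropic subgroup, the crucial case being that a nondegenerate rank-two layer raises the dimension by exactly one rather than two.

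The main obstacle is exactly this last point: that a sufficiently generic skew-Laurent layer over a Noetherian ring leaves both Krull and global dimension unchanged — for the fully generic quantum torus the dimension collapses all the way to $1$, the rank of a maximal commutative subtorus. This does not follow from the coarse additivity bounds for skew extensions, it is where noncommutativity is genuinely exploited, and it is the heart of the argument; it was established in \cite{MP:1988} and \cite{BG1:2000}. The bilinear bookkeeping ($s'\le s\le s'+1$, with equality $s=s'$ precisely when $\beta(X_n,-)$ is independent enough of the isotropic subgroups of $A'$) and the change-of-rings steps above are routine by comparison.
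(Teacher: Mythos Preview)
The paper does not prove this theorem: it is stated in Section~\ref{Kdim_results} as a known fact, conjectured in \cite{MP:1988} and established in \cite{BG1:2000}, and is quoted without argument. Your proposal is therefore not being compared against any proof in the paper; it is an attempted sketch of the Brookes--Groves result itself.

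As a sketch, your lower bound is essentially fine: the freeness of $k\ast A$ over $k\ast B$ together with the twist decomposition of the restricted induced module is the standard change-of-rings argument and does give $\mathrm{K}.\dim(k\ast A),\ \mathrm{gl.dim}(k\ast A)\ge s$. For the upper bound you correctly isolate the crux---that a sufficiently generic skew-Laurent layer over $\Lambda'$ leaves the Krull dimension unchanged---and you are right that this is where all the real work lies. But you do not prove it; you explicitly defer to \cite{MP:1988} and \cite{BG1:2000} at exactly this point. So in the end your argument, like the paper, rests on citing \cite{BG1:2000}. Two small corrections: first, \cite{MP:1988} treats special cases and states the conjecture, but the general statement is due to \cite{BG1:2000}, so ``established in \cite{MP:1988} and \cite{BG1:2000}'' overstates the role of the former; second, your line ``by \cite{MP:1988} it suffices to bound the Krull dimension'' presupposes the equality of Krull and global dimensions for arbitrary quantum tori, which is part of what the theorem asserts rather than an input you may invoke.
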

We also note the following important fact is a rewording of  \cite[Theorem 3]{BG1:2000}. 

\begin{proposition}
Suppose that a quantum torus algebra $k \ast A$ has a finitely generated module with GK dimension $m$ then $A$ has a subgroup $B$ with rank equal to $\rk(A) - m$ for which the corresponding (sub-)twisted group algebra $k \ast B$ is commutative.
\end{proposition}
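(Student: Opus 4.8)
The plan is to derive this from \cite[Theorem 3]{BG1:2000}, together with Theorem \ref{sup-ranks-K.dim}; indeed the statement is little more than a re‑packaging of those results. Throughout write $\Lambda = k \ast A$, set $n = \rk(A)$ — so that $\gk(\Lambda) = n$ — and let $d = \mathrm{K.dim}(\Lambda)$. Recall from Theorem \ref{sup-ranks-K.dim} that there is a subgroup $B_0 \le A$ with $\rk(B_0) = d$ for which the sub‑twisted group algebra $k \ast B_0$ is commutative, and that no subgroup $C \le A$ with $k \ast C$ commutative has $\rk(C) > d$.

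The substantive ingredient, which is essentially the content of \cite[Theorem 3]{BG1:2000}, is the Bernstein‑type lower bound
\[ \gk_\Lambda(N) \ \ge \ n - d \qquad \text{for every nonzero finitely generated } \Lambda\text{-module } N. \]
Morally this says that the smallest possible GK dimension of a nonzero finitely generated $\Lambda$-module is $\rk(A) - \mathrm{K.dim}(\Lambda)$, the extremal modules being (roughly) those induced from finite‑dimensional modules over a maximal commutative subalgebra $k \ast B_0$; conversely, the structure theory of \cite{BG1:2000} attaches to any such $N$ a nonzero subquotient that is finitely generated over $k \ast B$ for some subgroup $B$ with $k \ast B$ commutative, and a growth estimate (using that $\Lambda$ is free as a right $k \ast B$-module on a transversal of $B$ in $A$) then forces $\gk_\Lambda(N) \ge \rk(A) - \rk(B) \ge n - d$. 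I would take this inequality as a black box quoted from \cite{BG1:2000}; pinning down exactly which formulation is used there and matching the dimensional bookkeeping (Krull dimension versus GK dimension, via the relationship recorded in \cite{MP:1988}) is the only genuinely delicate point, and it is where the weight of the argument lies. Everything after that is elementary manipulation of subgroups of free abelian groups.

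Granting the inequality, the proof finishes quickly. Let $M$ be a finitely generated $\Lambda$-module with $\gk(M) = m$. Then $m \ge n - d$, i.e. $d \ge n - m \ge 0$. Pick $B_0 \le A$ as above with $\rk(B_0) = d$ and $k \ast B_0$ commutative. Being free abelian of rank $d \ge n - m$, the group $B_0$ contains a subgroup $B$ with $\rk(B) = n - m$; then $k \ast B$ is a subalgebra of the commutative ring $k \ast B_0$ and is therefore commutative, while $\rk(B) = \rk(A) - m$, which is exactly what is asserted. (In the boundary case $m = n$ one takes $B$ trivial, so that $k \ast B = k$.)
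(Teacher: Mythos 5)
Your derivation from the Bernstein-type bound and Theorem \ref{sup-ranks-K.dim} to the Proposition is elementary and formally valid, but there is a circularity problem with the black box you invoke. The inequality $\gk(N) \ge \rk(A) - \mathrm{K.dim}(k\ast A)$ is exactly the paper's Corollary \ref{hol_numb}, and the paper derives that corollary \emph{from} the Proposition (together with Theorem \ref{sup-ranks-K.dim}). Moreover the paper identifies Theorem~3 of \cite{BG1:2000} as ``a rewording'' of the Proposition itself, not of the inequality; the paper's entire proof of the Proposition is that citation. So within the paper's logical framework, quoting the inequality as the substantive content of \cite[Theorem 3]{BG1:2000} and using it to recover the Proposition is proving a statement from its own consequence. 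The two statements are indeed equivalent in the presence of Theorem \ref{sup-ranks-K.dim} (your manipulation shows one direction, and the paper's derivation of Corollary \ref{hol_numb} shows the other), so you have not produced a contradiction --- but you have not produced an independent proof either.

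The irony is that your own parenthetical sketch of \emph{why} the inequality should hold already contains a direct proof of the Proposition, making the detour unnecessary. You say the structure theory of \cite{BG1:2000} attaches to any nonzero finitely generated $N$ a subgroup $B \le A$ with $k \ast B$ commutative such that a growth estimate forces $\gk(N) \ge \rk(A) - \rk(B)$. If $\gk(N) = m$ this gives $\rk(B) \ge \rk(A) - m$, and since $B$ is free abelian one may pass to a subgroup $B' \le B$ of rank exactly $\rk(A) - m$; then $k \ast B'$ is a subalgebra of the commutative ring $k \ast B$, hence commutative, and $B'$ is the subgroup the Proposition asks for. That is the content of Theorem~3 as the paper reads it. Had you stated this as the argument rather than routing it through the holonomic bound $m \ge n - d$, the proof would have matched what the citation actually supplies, and you would not have needed Theorem \ref{sup-ranks-K.dim} at all.
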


The following interesting corollary is a clear consequence of combining the last two results recalling from \cite{MP:1988} that the GK dimension of a finitely generated $k \ast A$-module is a non-negative integer.

\begin{corollary}\label{hol_numb}
The GK dimension of a finitely generated $k \ast A$-module $M$ satisfies 
\[ \gk(M) \ge \rk(A) - \mathrm{K.dim}(k \ast A), \]
where $\mathrm{K.dim}(k \ast A)$ stands for the Krull dimension of the algebra $k \ast A$.
\end{corollary}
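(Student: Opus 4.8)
The plan is to derive the inequality by simply chaining together the Proposition and Theorem \ref{sup-ranks-K.dim} that immediately precede it, together with the integrality of the GK dimension of a finitely generated $k \ast A$-module recorded in \cite{MP:1988}. So, assuming $M \neq 0$, I would set $m := \gk(M)$ and first note that $m$ is a non-negative integer with $m \le \gk(k \ast A) = \rk(A)$, the last equality being the standard value of the GK dimension of an $n$-dimensional quantum torus; in particular $\rk(A) - m$ is a non-negative integer, so the statement to be proved is not vacuous on the side of ranks.

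Next I would feed the finitely generated module $M$ of GK dimension $m$ into the Proposition (the rewording of \cite[Theorem 3]{BG1:2000}): this yields a subgroup $B \le A$ with $\rk(B) = \rk(A) - m$ for which the sub--twisted group algebra $k \ast B$ is commutative. Finally I would invoke Theorem \ref{sup-ranks-K.dim}, according to which $\mathrm{K.dim}(k \ast A)$ equals the supremum of the ranks of all subgroups of $A$ whose associated sub--twisted group algebra is commutative; in particular $\mathrm{K.dim}(k \ast A) \ge \rk(B) = \rk(A) - m$. Rearranging gives $m \ge \rk(A) - \mathrm{K.dim}(k \ast A)$, which is exactly the claimed bound.

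There is essentially no obstacle here, since the substantive work is already carried out in the quoted Theorem and Proposition; the deduction is only a couple of lines. The one point worth a brief remark is the degenerate case $M = 0$ (where one simply excludes it, or adopts the convention $\gk(0) = -\infty$ with the understanding that the assertion concerns nonzero modules), and the observation that this lower bound is precisely what makes the notion of a \emph{holonomic} $k \ast A$-module --- one attaining equality --- meaningful for quantum tori.
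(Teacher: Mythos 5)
Your proof is correct and follows exactly the route the paper intends: the corollary is stated there as a direct consequence of the two preceding results together with the integrality of the GK dimension, and your chaining of the Proposition (to produce the subgroup $B$ with $\rk(B) = \rk(A) - m$ and $k \ast B$ commutative) with Theorem~\ref{sup-ranks-K.dim} (to bound $\rk(B)$ above by the Krull dimension) is precisely that deduction. Nothing is missing, and your remark on the zero module is a reasonable but minor point.
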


\subsection{Ore subsets and the subgroups of finite index in $A$}
\label{Ore_subsets-fin_ind_sbgrp}
Using the fact that $A \cong \mathbb Z^n$ is an ordered group (with the lexicographic order) it is not difficult to show that $k \ast A$ is a domain and each 
 unit in this ring has the form $\mu \bar a$ for $\mu \in k^{\times}$ and $a \in A$. 
 
 Let $M$ be a finitely generated $k \ast A$-module. If $A_0$ is a subgroup of $A$ with finite index then it is not difficult to see that $M$ is a finitely generated $k \ast A_0$-module. Moreover the algebra $k \ast A$ is a \emph{finite normalizing extension} of its subalgebra $k \ast A_0$, that is, it is generated over the latter by a finite number of normal elements namely the images in $k \ast A$ of a transversal $T$ of $A_0$ in $A$. We recall that in a ring  $T$ an element $t \in T$ normalizes a subring $S$ if $tS = St$.  
 
 It is well-known fact (e.g., \cite{MP:1988}) that for a subgroup $B \le A$ the non-zero elements of the (sub-) twisted group algebra $k \ast B$ is an Ore subset in $k \ast A$. 
 \subsection{Crossed products resulting from Ore localization}
A more general structure than a twisted group algebra is a \emph{crossed product} $R \ast G$ of a group $G$ over a ring $R$. Here the ground ring $R$ need not be a field and the scalars in $R$ need not be central in $R \ast G$. As in the case of twisted group algebras a copy $\bar G$ of $G$ contained in $R \ast G$ is an $R$-module basis and multiplication of the basis elements is defined exactly as in $(\ref{t-mult})$ above. However, as already remarked the scalars in $R$ need not commute with the basis elements $\bar g$  but the relation $\bar g r= \sigma_g(r)\bar g$ holds for $r \in R$ and $\sigma_g \in \aut(R)$.      
We refer the interested reader to the text \cite{PM1:1989}
for further details concerning crossed products. Besides being generalizations of twisted group algebras, crossed products also arise as suitable localizations of the former rings. It is in this latter form that a crossed product will arise in our paper. For example, the Ore localization $(k \ast A)(k \ast B \setminus \{0\})^{-1}$ is a crossed product $D \ast A/B$, where $B \le A$ and $D$R denotes the quotient division ring of the noetherian domain $k \ast B$.





\section{The GK dimension of finitely generated $\Lambda_{\mathfrak q}$-modules}
\label{BG-dim--GK-dim}

The GK dimension is a particularly well-behaved dimension for the finitely generated modules over the algebras we are studying. One reason for this is that the Hilbert-Samuel machinery which works for almost commutative algebras can be adapted for our class of algebras as well (Section 5 of \cite{MP:1988}). For example, we have the following: 

\begin{proposition}\label{dim_exact}
Let \[ 0 \rightarrow L \rightarrow M \rightarrow N \rightarrow 0 \] 
be an exact sequence of $\Lambda_{\mathfrak q}$-modules. Then \[\gk(M) = \max\{\gk(L), \gk(N)\}\] 
\end{proposition}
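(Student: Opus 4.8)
The plan is to deduce the exactness property of GK dimension from the fact, recalled in Section \ref{BG-dim--GK-dim}, that the Hilbert--Samuel machinery of \cite[Section 5]{MP:1988} applies to $\Lambda_{\mathfrak q}$-modules. Concretely, I would first fix a finite-dimensional generating subspace $V$ of $\Lambda_{\mathfrak q}$ containing $1$ and stable under the standard filtration, e.g.\ $V = \operatorname{span}_k\{1, X_1^{\pm 1},\dots, X_n^{\pm 1}\}$, so that $\Lambda_{\mathfrak q} = \bigcup_m V^m$ with $V^0 \subseteq V^1 \subseteq \cdots$. For a finitely generated module $M$ with finite-dimensional generating space $M_0$, set $M_m := V^m M_0$; then $\gk(M)$ is the degree of polynomial growth of $\dim_k M_m$, and by the results quoted from \cite{MP:1988} this is well-defined, independent of the choices, and a non-negative integer.

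The key observation is the standard one for filtered-module arguments: given the short exact sequence $0 \to L \to M \to N \to 0$, choose a finite generating space $M_0$ of $M$; then $N_0 := \pi(M_0)$ generates $N$ (where $\pi\colon M \to N$ is the surjection), and $L_0 := M_0 \cap L$ together with finitely many further elements generates $L$ — or, more cleanly, one works with the induced filtrations $L_m := M_m \cap L$ and $N_m := \pi(M_m)$. First I would establish the two inequalities separately. For the "$\geq$" direction, $L$ and $N$ are subquotients realized by the induced filtrations, and $\dim_k M_m = \dim_k L_m + \dim_k N_m \geq \dim_k L_m$ and $\geq \dim_k N_m$, which gives $\gk(M) \geq \max\{\gk(L),\gk(N)\}$ directly; here one must check that the induced filtration on $L$ (resp.\ $N$) is equivalent, in the sense of \cite{KL:2000}, to a filtration coming from an honest finite generating set of $L$ (resp.\ $N$), which is where finite generation of $L$ — automatic since $\Lambda_{\mathfrak q}$ is noetherian — and the Artin--Rees-type behaviour in \cite[Section 5]{MP:1988} are used. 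For the "$\leq$" direction, from $\dim_k M_m = \dim_k L_m + \dim_k N_m$ and the fact that both summands grow at most like $\max\{\gk(L),\gk(N)\}$, we get $\gk(M) \leq \max\{\gk(L),\gk(N)\}$, completing the equality.

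The main obstacle I anticipate is not the additivity of dimensions along a single fixed filtration — that is elementary — but rather the comparison between the filtration on $L$ induced from $M$ and a filtration defined intrinsically from a finite generating set of $L$. In general subspace filtrations need not be "good"/stable, and one needs the analogue of the Artin--Rees lemma in this (non-noetherian-graded but filtered-noetherian) setting to conclude that the two filtrations are equivalent and hence compute the same GK dimension. This is precisely the content that \cite[Section 5]{MP:1988} provides for $\Lambda_{\mathfrak q}$: the associated graded ring with respect to $V$ is noetherian, so good filtrations exist and any two good filtrations on a finitely generated module are equivalent. Granting that input, the proof reduces to the bookkeeping above. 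I would therefore structure the write-up as: (1) recall the filtration and the fact that good filtrations on f.g.\ modules are equivalent; (2) pick a good filtration on $M$, induce good filtrations on $L$ and $N$; (3) use $\dim_k M_m = \dim_k L_m + \dim_k N_m$ to read off both inequalities; (4) conclude. Alternatively, if one prefers to avoid re-developing the filtered theory, the entire statement can simply be cited as a special case of \cite[Proposition 5.1(e) or Lemma 5.3]{MP:1988}, with the filtered argument included only for completeness.
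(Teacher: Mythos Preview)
Your proposal is correct and follows essentially the same route as the paper: the paper's entire proof is the one-line citation ``It follows from \cite[Lemma~5.5]{MP:1988},'' which is exactly the alternative you name at the end of your sketch. Your expanded filtered/Artin--Rees argument simply unpacks what that lemma encodes, so there is no substantive difference in approach---only in level of detail (and you may want to double-check the precise lemma number in \cite{MP:1988}).
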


\begin{proof}
It follows from \cite[Lemma ~5.5]{MP:1988}.
\end{proof}

In \cite{BG1:2000} the finitely generated modules over crossed products of a free abelian group of finite rank over a division ring $D$ were studied with group-theoretic applications in mind.     
A dimension for finitely generated modules which was shown to coincide with the Gelfand--Kirillov dimension (measured relative to $D$) was introduced and studied. 
We aim to employ this dimension in our investigation of  simple modules over the algebras we are considering here which are special cases of the aforementioned crossed products.
We state below its definition and some key properties which were established in \cite{BG1:2000}. This dimension is used in conjunction with an appropriate notion of a critical module to be discussed below. 

\begin{definition}[\cite{BG1:2000}]\label{Br-Gr_Defn}
Let $M$ be a finitely generated $\Lambda_{\mathfrak{q}}$-module. The dimension $\dim M$ of $M$ is the maximum $r$, where $0 \leq r \leq n$ so that for some subset $\mathcal I :=  \{i_1, i_2, \cdots, i_r\}$ of the indexing set $\{1, \cdots, n\}$ the module $M$ is not torsion as $\Lambda_{\mathfrak q, \mathcal I}$-module where
$\Lambda_{\mathfrak q, \mathcal I}$ denotes the subalgebra of $\Lambda_{\mathfrak q}$ generated by the variables $X_i$ for $i \in \mathcal I$ and their inverses.     
\end{definition}

\begin{remark}
\label{key_remrk_1}
In \cite{BG1:2000} it was shown that the dimension $\dim M$ of $M$ in the sense of the last definition coincides with the GK dimension of $M$. 
\end{remark}

The following facts from \cite{BG1:2000} which we state for the algebras $\Lambda_{\mathfrak q}$ 
were shown for more general crossed products.
 
\begin{lemma}
\label{embeds_inf_free_module}
Let $M$ be a finitely generated $\Lambda_{\mathfrak{q}}$-module  with GK dimension $d$ 
and let $\Lambda_1$ be the subalgebra of $\Lambda_{\mathfrak{q}}$ generated by the variables $\{ X_{i_i}, X_{i_2}, \cdots, X_{i_d} \}$  and their inverses.  
Then $M$ cannot not embed a $\Lambda_1$-submodule which is free of infinite rank.
\end{lemma}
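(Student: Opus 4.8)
The plan is to argue by contradiction and extract a contradiction with the characterization of GK dimension as the Bell--Goodearl dimension $\dim M$ from Definition \ref{Br-Gr_Defn} and Remark \ref{key_remrk_1}. So suppose, for some index set $\mathcal I = \{i_1,\dots,i_d\}$ of size $d = \gk(M)$, that $M$ contains a $\Lambda_1$-submodule $F$ that is free of infinite rank as a $\Lambda_1$-module, where $\Lambda_1 = \Lambda_{\mathfrak q, \mathcal I}$. The key structural observation is that $\Lambda_{\mathfrak q}$ is a finite normalizing extension of $\Lambda_1$ only when $\mathcal I$ has full size $n$; for $d < n$ this fails, so instead I would use Ore localization. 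Namely, by Section \ref{Ore_subsets-fin_ind_sbgrp} the nonzero elements of $\Lambda_1 = k \ast B$ (with $B = \langle X_i : i \in \mathcal I\rangle \cong \mathbb Z^d$) form an Ore subset in $\Lambda_{\mathfrak q} = k \ast A$, and the localization is the crossed product $D \ast (A/B)$ where $D$ is the quotient division ring of $\Lambda_1$. The free submodule $F$ of infinite rank, being torsion-free over the Ore domain $\Lambda_1$, survives localization: $F \otimes_{\Lambda_1} D$ is an infinite-dimensional $D$-subspace sitting inside $M \otimes_{\Lambda_1} D$.

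The heart of the argument is then a rank/finiteness count. Since $M$ is finitely generated over $\Lambda_{\mathfrak q}$, the localized module $\widehat M := M(\Lambda_1 \setminus \{0\})^{-1}$ is a finitely generated module over the crossed product $D \ast (A/B)$. I claim that $\dim M = d$ forces $\widehat M$ to be a \emph{torsion} $D \ast (A/B)$-module — equivalently, $\widehat M$ has finite length as a module over the division ring obtained by further localizing at the nonzero elements of the full group ring, or more concretely, $\widehat M$ has finite $D$-dimension after inverting the remaining variables. Indeed, if $\widehat M$ were \emph{not} torsion over $D \ast (A/B)$, then $M$ would fail to be torsion over $\Lambda_{\mathfrak q, \mathcal J}$ for some $\mathcal J \supsetneq \mathcal I$ (pulling back a non-torsion direction through one more variable $X_j$, $j \notin \mathcal I$), contradicting the maximality of $d$ in Definition \ref{Br-Gr_Defn}. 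So $\widehat M$ is torsion over $D \ast (A/B)$; but a finitely generated torsion module over the crossed product $D \ast \mathbb Z^{n-d}$ has \emph{finite} length, hence finite $D$-dimension. This contradicts the fact that $\widehat M$ contains $F \otimes_{\Lambda_1} D$, which has infinite $D$-dimension. The contradiction proves no such $F$ exists.

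The step I expect to be the main obstacle is the precise justification that $\dim M = d$ forces the localized module $\widehat M$ to be \emph{torsion} — and in fact of finite $D$-length — over $D \ast (A/B)$. Naively one only knows that $M$ is torsion over each $\Lambda_{\mathfrak q, \mathcal J}$ with $|\mathcal J| = d+1$; translating this into a genuine torsion (finite-length) statement for the crossed product $D \ast (A/B)$ requires the machinery of \cite{BG1:2000} relating $\dim$ to the behaviour under these one-step localizations, i.e.\ that after inverting $\Lambda_1$ the dimension drops to zero and a dimension-zero finitely generated crossed-product module over $D \ast \mathbb Z^{n-d}$ is finite-dimensional over $D$. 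I would handle this by invoking the additivity/localization properties of $\dim$ from \cite{BG1:2000} together with the fact (Section \ref{Ore_subsets-fin_ind_sbgrp}) that $\Lambda_{\mathfrak q}$ localizes at $k \ast B \setminus \{0\}$ to a crossed product, and then appeal to the standard fact that a torsion finitely generated module over a crossed product of $\mathbb Z^{m}$ over a division ring has finite composition length, hence finite dimension over that division ring. Once that is in place the infinite-rank freeness of $F$ is immediately incompatible, and the lemma follows.
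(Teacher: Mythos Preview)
The paper does not give a proof here at all: it simply records that, once one knows (Remark~\ref{key_remrk_1}) that $\dim = \gk$, the statement is exactly \cite[Lemma~2.3]{BG1:2000}. So your proposal is being compared against a bare citation, and the question is only whether your independent argument is sound.

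Your reduction to the localized picture is fine and, in fact, equivalent to the lemma: $M$ contains a free $\Lambda_1$-submodule of infinite rank if and only if $\widehat M = M(\Lambda_1\setminus\{0\})^{-1}$ is infinite-dimensional over $D$. The genuine problem is the step you yourself flag as the ``main obstacle'' and then try to discharge with a ``standard fact'': you assert that a finitely generated \emph{torsion} module over a crossed product $D\ast\mathbb Z^{m}$ has finite composition length, hence finite $D$-dimension. This is false as soon as $m\ge 2$. Already in the commutative case $D[x^{\pm 1},y^{\pm 1}]$ the cyclic module $D[x^{\pm 1},y^{\pm 1}]/(y-1)\cong D[x^{\pm 1}]$ is torsion (it is killed by $y-1$) but is neither of finite length nor finite-dimensional over $D$. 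So ``$\widehat M$ is torsion over $D\ast(A/B)$'' --- which is all your argument extracts from $\dim M=d$ via a single enlarged index set $\mathcal J$ --- is far too weak to force $\dim_D\widehat M<\infty$.

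Your last paragraph tries to upgrade ``torsion'' to ``Brookes--Groves dimension zero'', and indeed one can show from $\dim M=d$ that $\widehat M$ is torsion over each one-variable subalgebra $D\ast\langle \bar X_j\rangle$, $j\notin\mathcal I$, i.e.\ that $\dim_{D\ast(A/B)}\widehat M=0$. But then you still need the implication ``$\dim=0$ and finitely generated $\Rightarrow$ finite $D$-dimension'' for crossed products $D\ast\mathbb Z^{m}$, and that is precisely the $d=0$ instance of the very lemma you are proving (transported to the division-ring base $D$). As written, the argument is either circular or rests on a false ``standard fact''. The approach can be salvaged, but only by setting it up as an honest induction on $m=n-d$ (the base case $m=1$ being the elementary fact that a finitely generated torsion module over a skew Laurent ring $D[t^{\pm 1};\sigma]$ is finite-dimensional over $D$), or --- more in the spirit of \cite{BG1:2000} --- by a direct growth count: the filtration $M_m$ grows like a polynomial of degree $d$ (by the Hilbert--Samuel theory of \cite{MP:1988}), while a free $\Lambda_1$-submodule of rank $N$ contributes at least $N$ times the degree-$d$ growth of $\Lambda_1$, forcing $N$ to be bounded.
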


\begin{proof}
Noting Remark \ref{key_remrk_1} this follows from \cite[Lemma 2.3]{BG1:2000}.
\end{proof}

\begin{definition}
A nonzero $\Lambda_{\mathfrak q}$-module $N$ is said  to be \emph{critical} if for each non-zero submodule $N$ of $M$  
\[ \gk(M/N) < \gk(M). \]
\end{definition}

\begin{proposition} \cite[Proposition ~2.5]{BG1:2000}\label{CRIT_MOD_EXIST}
Every non-zero $\Lambda_{\mathfrak{q}}$-module contains a finitely generated critical submodule.  
\end{proposition}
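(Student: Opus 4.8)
The plan is to mimic the standard construction of critical modules in dimension theory (as in the theory of Krull dimension of Gabriel and Rentschler), using the fact established in \cite{MP:1988} that $\gk$ takes only finitely many values — indeed only values in $\{0, 1, \dots, n\}$ — on finitely generated $\Lambda_{\mathfrak q}$-modules, so there is no infinite-descent obstruction. First I would reduce to the finitely generated case: given any nonzero module $M$, pick a nonzero cyclic (hence finitely generated) submodule $M_0 \le M$; it suffices to find a critical submodule of $M_0$, so we may assume $M$ itself is finitely generated and nonzero.

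Next I would argue by induction on $d := \gk(M) \in \{0,1,\dots,n\}$. If $d = 0$ then by Proposition \ref{dim_exact} every proper quotient $M/N$ also has GK dimension $0$, but we can instead pass to a nonzero submodule of minimal length among those with a prescribed GK dimension; more cleanly, when $d=0$ one uses that a module of GK dimension $0$ is finite-dimensional over $k$ (this is part of the Hilbert–Samuel setup of \cite{MP:1988}), so $M$ has a simple submodule, and a simple module is vacuously critical. For the inductive step, consider the set $\mathcal S$ of nonzero finitely generated submodules $N \le M$ with $\gk(N) = d$; this set is nonempty since $M \in \mathcal S$. If some $N \in \mathcal S$ is critical we are done. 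Otherwise, for each $N \in \mathcal S$ there is a nonzero submodule $N' \le N$ with $\gk(N/N') = \gk(N) = d$; by Proposition \ref{dim_exact} this forces $\gk(N') \le d$, and I want to set up a descending chain $N_0 \supsetneq N_1 \supsetneq \cdots$ whose successive quotients all have GK dimension $d$.

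The key point is then to extract criticality from this chain. The standard trick: if $M$ is not itself critical, choose a submodule $N_1 \subsetneq M$ maximal with respect to the property $\gk(M/N_1) = d$ — such a maximal element exists because $\Lambda_{\mathfrak q}$ is right noetherian (it is a noetherian domain, as recalled in Section \ref{Ore_subsets-fin_ind_sbgrp}), so the poset of submodules with that property has maximal elements by the ascending chain condition. Then I claim $M/N_1$ is critical of GK dimension $d$: for any nonzero submodule $\bar P = P/N_1$ of $M/N_1$ we have $N_1 \subsetneq P$, so by maximality $\gk(M/P) < d$, i.e. $\gk\big((M/N_1)/\bar P\big) < d = \gk(M/N_1)$, which is exactly criticality. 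This $M/N_1$ is a finitely generated critical module, but it is a quotient of $M$, not a submodule of $M$; to finish, apply Proposition \ref{CRIT_MOD_EXIST}'s target statement to a submodule — in fact the cleanest route is: by Proposition \ref{dim_exact} applied to $0 \to N_1 \to M \to M/N_1 \to 0$, either $\gk(N_1) < d$ (then replace $M$ by... no) — so instead I would run the maximality argument not on $M$ but choose, among all nonzero finitely generated submodules of $M$, one of minimal GK dimension, say $L$ with $e := \gk(L)$, and then note every nonzero submodule of $L$ has GK dimension $\ge e$ by minimality, hence $= e$; now rerun the noetherian maximality argument inside $L$ to get $L_1 \subsetneq L$ maximal with $\gk(L/L_1) = e$, observe $L/L_1$ is critical, and finally use that $L_1$ has GK dimension $e$ too so that in fact one shows $L$ itself, after finitely many such steps (impossible to continue forever since each step produces a proper quotient and... ), must contain a critical submodule.

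I expect the main obstacle to be precisely this last bookkeeping: converting a critical \emph{quotient} into a critical \emph{submodule}. The honest fix is the observation that in $L$ (chosen with minimal GK dimension $e$ among nonzero f.g.\ submodules), if $L$ is not critical we get $L_1 \subsetneq L$ with $\gk(L/L_1)=e=\gk(L_1)$; iterating gives $L = L_0 \supsetneq L_1 \supsetneq L_2 \supsetneq \cdots$ all of GK dimension $e$ with $\gk(L_{i}/L_{i+1}) = e$. This chain cannot be infinite: by \cite[Section 5]{MP:1988} the Hilbert–Samuel multiplicity (leading coefficient of the growth function) is additive on short exact sequences of modules all of the same GK dimension $e$, so the multiplicity of $L$ would strictly dominate that of every $L_i$ plus a positive contribution at each step, bounding the length of the chain by the multiplicity of $L$. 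Hence the process terminates, and the terminal $L_N$ is a nonzero finitely generated critical submodule of $M$. This multiplicity-additivity input is exactly the Hilbert–Samuel machinery alluded to at the start of Section \ref{BG-dim--GK-dim}, and it is what makes the descending chain argument legitimate; verifying it applies verbatim in our setting (it does, by \cite{MP:1988}) is the one genuinely nontrivial ingredient, everything else being the textbook Gabriel–Rentschler construction transported to the GK-dimension filtration.
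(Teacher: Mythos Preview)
The paper does not prove this proposition; it simply quotes \cite[Proposition~2.5]{BG1:2000} and moves on, so there is no paper-proof to compare against. Your argument, once you abandon the false starts and settle on the final route, is sound and is the standard one: pass to a nonzero cyclic submodule, choose among all nonzero finitely generated submodules of $M$ one, say $L$, of minimal GK dimension $e$ (possible because the values lie in $\{0,1,\dots,n\}$), observe that every nonzero submodule of $L$ then has GK dimension exactly $e$ (being finitely generated by noetherianity and bounded below by minimality of $e$, above by Proposition~\ref{dim_exact}), and run the descending chain $L = L_0 \supsetneq L_1 \supsetneq \cdots$ with $\gk(L_i/L_{i+1}) = e$, bounding its length via additivity of the Hilbert--Samuel multiplicity of \cite[Section~5]{MP:1988}. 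The terminal $L_N$ is the desired critical submodule. The only substantive input beyond Proposition~\ref{dim_exact} is that this multiplicity is a positive integer and additive on short exact sequences of modules of equal GK dimension, which is indeed part of the package in \cite{MP:1988}. The induction-on-$d$ and noetherian-maximality paragraphs are dead ends you yourself recognise and discard midstream; in a clean write-up you should simply delete them and present only the minimal-dimension-plus-multiplicity argument.
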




\section{The dichotomy result}


\begin{theorem_1}
Let $\Lambda_{\mathfrak q}$ be an n-dimensional quantum torus algebra with Krull dimension $n - 1.$ For the GK dimension of a simple $\Lambda_{\mathfrak q}$-module $M$ the following dichotomy holds   
\[\gk (M) = 1, \ \ \   \mathrm{or} \ \ \    \gk (M)  = \gk( \Lambda_{\mathfrak q}) - \gk(\mathcal Z(\Lambda_{\mathfrak q})) - 1,\]
where $\mathcal Z(\Lambda_{\mathfrak q})$ denotes the center of $\Lambda_{\mathfrak q}$.
\end{theorem_1}

\begin{proof}
As in Section \ref{qtorus} we may write  $\Lambda_\mathfrak q$ as a twisted group algebra  
$\Lambda_\mathfrak q : = k \ast A$ for a free abelian group $A$ with rank $n$. Moreover, we let $\bar {a}$ stand for the image of $a \in A$ in $\Lambda_\mathfrak q$.   
It is not difficult to see that $\mathcal Z(\Lambda_{\mathfrak q})$ has the form 
 $k \ast Z$  for a suitable subgroup $Z$ of $A$(e.g, Lemma 1.1 of \cite{OP:1995}).
As \[ \gk(k \ast \mathbb Z^l) = l \] (e.g., \cite{MP:1988}), the latter alternative in the assertion of the theorem then reads
\[ \gk (M)  = \rk(A) - \rk(Z) - 1. \]
Let $P$ be the annihilator of $M$ in $k \ast Z.$ Clearly $P$ is a prime ideal of $k \ast Z$. The action of $k \ast Z$ on $M$ gives an embedding \[ (k \ast Z)/P \hookrightarrow \End_{\Lambda_{\mathfrak q}}(M).\] 

It is well-known (e.g.,Proposition \cite[9.4.21]{MP:1988}) that the quantum torus algebra $k \ast A$ satisfies Nullstellensatz and in particular $\End_{\Lambda_{\mathfrak q}}(M)$ is algebraic over $k$. Hence so is $(k \ast Z)/P$.  As a commutative affine algebraic domain is a field~\cite{LHR:2006} it follows that $P$ is a maximal ideal of  $k \ast Z$. 

Set $K  = (k \ast Z)/P$ and $Q = P\Lambda_{\mathfrak q}$. Clearly, $M$ is a simple $\Lambda_{\mathfrak q}/Q$-module.
By \cite[Chapter 1, Lemmas 1.3 and 1.4]{PM1:1989} the $k$-algebra $\Lambda_{\mathfrak q}/Q$ is a twisted group algebra $K \ast A/Z$ of $A/Z$ over $K$ with a transversal $T$ for $Z$ in $A$ yielding a $K$-basis as the set $\{ \bar t + Q \mid t \in T\}$. Moreover, the elements $\zeta + Q$, where $\zeta \in k \ast Z$ constitute a copy of $K$ in $\Lambda_{\mathfrak q}/Q$. We note that the group-theoretic commutator $[\bar{t_1} + Q , \bar{t_2} + Q]$ with values in the 
unit group of  $\Lambda_{\mathfrak q}/Q$ for any $t_1, t_2 \in T$ satisfies \begin{equation}
\label{quotient}
[\bar{t_1} + Q , \bar{t_2} + Q] = [\bar{t_1}, \bar{t_2}] + Q \in k^\times + Q. 
\end{equation}

Now, in view of Proposition 5.1(c) of \cite{KL:2000} we have 
\[ \gk\mbox{-}\dim_{k \ast A}(M) = \gk\mbox{-}\dim_{K \ast A/Z}(M). \]

In the last equation in both LHS and RHS the GK dimension is being measured relative to $k$. Since $K$ is finitely generated and algebraic over $k$ therefore $[K : k] < \infty$ and in view of \cite[Lemma 2(ii)]{WU:1991}, it suffices to determine the possible values of the GK dimension of the simple $K \ast A/Z$-module $M$ measured relative to $K$.   

Our main point in passing to the algebra $K \ast A/Z$ is that as a $K$-algebra it is central, that is, has center $K$. 
Indeed, as we already saw in Section \ref{qtorus}, the center of $K \ast A/Z$ is of the form $K \ast Y$ for a subgroup $Y$ of $A/Z$.
If the image $\bar t + Q$ of  some coset $tZ \in Y$ centralized all elements of $K \ast A/Z$  then using (\ref{quotient}) we have,  
\[ [\bar t, \bar {t_1}] + Q = 1 + Q.    \ \ \ \ \ \ \  \forall t_1 \in T.  \]
As $[\bar t, \bar{t_1}] \in k^\times$, it follows that $[\bar t, \bar{t_1}] = 1$. 
Thus, if $\bar t + Q$ lies in the center of $K \ast A/Z$ then $\bar t$ must be in the center of $k \ast Z$ of $k \ast A$.  Since $t \in T$, this is possible only if $t = 1$.   

As already noted in Section \ref{qtorus} by the theorem of Brookes \cite{BS1:2000} the  dimension of $k \ast A$ equals the supremum of the ranks of the subgroups $B \le A$ such that the subalgebra $k \ast B$ is commutative. In the present situation this means the existence of a subgroup $B$ of $A$ with rank $n - 1$ such that $k \ast B$ is commutative.
  
In passing to $K \ast A/Z$ although the center becomes equal to the base field a small difficulty appears, namely, that $A/Z$ need not be torsion-free. To overcome this we may replace $A$ by a subgroup $A_0$ of finite index such that $A_0/Z$ is torsion-free. Then $B_0: =  A_0 \cap B$ is a subgroup of $A_0$ with rank $n - 1$ and clearly $k \ast B_0$ is a commutative sub algebra of $k \ast A_0$. Evidently, $k \ast B_0Z$ is commutative and therefore by the preceding paragraph $\rk(B_0Z) = \rk(B_0)$. 
Replacing $B_0$ by $B_0Z$ if necessary we may assume that $B_0 \ge Z$. 
In view of (\ref{quotient}) the subalgebra $K \ast B_0/Z$ of $K \ast A_0/Z$ is commutative. We obviously have \[ \rk(B_0/Z) = n - 1 - \rk(Z) = \rk(A_0/Z) - 1 = \rk(A/Z) - 1.\] 

The last equation means that $K \ast A_0/Z$ is a $n - \rk(Z)$-dimensional quantum torus over $K$ with (Krull or global) dimension $n  - \rk(Z) - 1$. As a module over the sub-algebra $K \ast A_0/Z$ the simple $K \ast A/Z$-module $M$ need not remain simple.
However as $K \ast A/Z$ is a finite normalizing extension of $K \ast A_0/Z$ (Section \ref{qtorus}) the $K \ast A/Z$-module $M$ decomposes as a finite direct sum of simple $K \ast A_0/Z$-modules (e.g., Exercise 15A.3 \cite{LHR:2008}). We thus have 
\[ M = N_1 \oplus N_2 \oplus \cdots \oplus N_s \]
as $K \ast A_0/Z$-modules. 
By Lemma 2.7 of \cite{BG1:2000},    
\[\gk\mbox{-}\dim_{K \ast A/Z}(M) = \gk\mbox{-}\dim_{K \ast A_0/Z}(M).\]
Moreover the GK dimension of a finite direct sum of modules is the maximum of the GK dimensions of the summands (Proposition 5.1 of \cite{KL:2000}). In view of these remarks, to establish the theorem it suffices to show that 
if $F$ is field and  $F \ast \mathbb Z^r$ is a twisted group algebra with center $F$ and dimension equal to $r - 1$ then for any simple $F \ast \mathbb Z^r$-module $N$ the following dichotomy holds 
\begin{equation}
\gk(N) = 1 \ \ \ \mathrm{or} \ \ \  \gk(N) = r - 1.
\end{equation}
But this is precisely the content of \cite[Theorem 2.1]{AG1:2014}. 
 
\end{proof}

\subsection{Proof of Theorem 2}
\label{proof_thm_2}

 \begin{theorem_2}
Let $\Lambda_{\mathfrak{q}}$ be an $n$-dimensional quantum torus algebra and 
consider the skew-Laurent extension \[ \Lambda^\ast_{\mathfrak  q, \sigma} = \Lambda_{\mathfrak q}[Y^{\pm 1} ; \sigma], \] 
where $\sigma \in \aut (\Lambda_{\mathfrak{q}})$ is a scalar automorphism defined by $\sigma(X_i) = p_iX_i$.
Assume that the subgroups  $\mathscr G(\Lambda_{\mathfrak q})$ and  $\mathscr H_\sigma$ of $k^\times$ as in Definition \ref{lmda-grp} intersect trivially. 
Let $\mathscr V (\Lambda_\mathfrak q)$ be the (finite) set of GK dimensions of simple $\Lambda_{\mathfrak q}$-modules and similarly $\mathscr V(\Lambda^\ast_{\mathfrak  q, \sigma})$ the set of GK dimensions of simple $\Lambda^\ast_{\mathfrak  q, \sigma}$-modules. Then 
 \[ \mathscr V(\Lambda^\ast_{\mathfrak  q, \sigma}) \subseteq 
 \{\rk(\mathscr H_\sigma), \cdots, n\} \cup ( \mathscr V(\Lambda_\mathfrak q) + 1). \]  
\end{theorem_2}
Here $\mathscr V(\Lambda_\mathfrak q) + 1$ stands for the set $\{u + 1 \mid u \in \mathscr V(\Lambda_\mathfrak q)\}$.

\begin{proof}  
 
We write $\Lambda^\ast$ for ${\Lambda^\ast}_{\mathfrak{q}, \sigma}$ and $\Lambda$ for $\Lambda_{\mathfrak{q}}$.  Noting Proposition \ref{CRIT_MOD_EXIST}, we let $N$ be a finitely generated critical $\Lambda$-submodule of $M.$ 
Consider the $\Lambda^\ast$-submodule $N'$ of $M$ generated by $N$:  
\begin{equation}\label{sum_of_conj}
N^\prime : = N\Lambda^\ast = \displaystyle\sum_{i \in \mathbb{Z}} N Y^i.
\end{equation}

Since $N$ is assumed to be critical, therefore $N \ne 0$ and $N^\prime = M$. 
If the sum in (\ref{sum_of_conj}) is direct, then $N \Lambda^\ast \cong N \displaystyle\otimes_{\Lambda} \Lambda^\ast$ and Lemma 2.4 of \cite{BG1:2000} gives    
\begin{equation}
\label{induction_cse}
\gk(M) = \gk(N) + 1.\end{equation} 
Moreover, since the (left) $\Lambda$-module $\Lambda^\ast$ is free, it is faithfully flat, and it follows from this that $N$ must be a simple $\Lambda$-module.  Noting equation (\ref{induction_cse}) we thus obtain 
\begin{equation}\label{conc_1}
 \gk(M) \in \mathscr V (\Lambda_\mathfrak q) + 1. 
\end{equation}
Clearly the assertion of the theorem holds true in this case.
 We are thus left with the possibility where the sum $\displaystyle \sum_{i \in \mathbb{Z}} NY^i$ fails to be direct. We know from Lemma 2.4 of \cite{BG1:2000} that in this case 
\begin{equation}\label{sum_not_dir}
\gk(N) = \gk(M)
\end{equation}
recalling that the dimension being referred to in this same lemma coincides with the GK dimension measured relative to the ground field $k$. Let $d$ denote the common value in the last equation. 

In view of Definition \ref{Br-Gr_Defn} and the succeeding remark, 
there is a subset $I = \{i_1, \cdots, i_d\}$ of the indexing set $\{1, \cdots, n \}$ such that 
$N$ (and therefore $M$) is not $S: = \Lambda_{\mathfrak q}(I)\setminus \{0\}$-torsion, where $\Lambda_{\mathfrak q}(I)$ denotes the subalgebra of $\Lambda$ generated by the indeterminates 
$X_i^{\pm 1}$ for $i \in I$.

Consider the Ore localization $\Lambda^\ast S^{-1}$. By the definition of $S$ we know that $M$ is not $ S$-torsion and hence the corresponding localization $MS^{-1}$ is nonzero. Note that the ring $\Lambda^\ast  S^{-1}$ contains the quotient division ring $\mathscr D: = \Lambda_{\mathfrak q}(I)S^{-1}$.
Using Lemma \ref{embeds_inf_free_module} it is easily seen that $MS^{-1}$ is a finite dimensional $\mathscr D$-space. Set  $s = \dim_{\mathscr D} M S^{-1}$. 

Define 
\[ J : =  \{ 1, \cdots , n \} \setminus I \] and let $\mathscr G(I)$ denote the following subgroup of $\mathscr G(\Lambda_\mathfrak q)$:
\[ \mathscr G(I) := \langle   q_{kl}  \mid  k, l \in I  \rangle. \] 
Next, for each $j \in J$, we  define 
\[ \mathscr G(I, j) := \langle   q_{kj}  \mid  k \in  I  \rangle. \] 
We  will also need to refer to the subgroup $\mathscr H( I)$ of $\mathscr H_\sigma$ defined as follows:
\[ \mathscr H( I) : =  \langle p_i \mid i \in  I \rangle. \] 

As in the hypothesis of the theorem,   
\[ YX_j =  p_jX_jY ,  \  \   \ \   \ \  \ \ \   \forall j  \in J.\]
This is precisely the situation of Section 3.9 of \cite{MP:1988} (where the `unlocalized' generators are the generators indexed by $J$) and exactly as in that section the following dependence relations must hold:  
\begin{equation}
p_j^s \in \langle \mathscr G(I), \mathscr G( I, j),  \mathscr H( I) \rangle \ \ \ \ \ \  \forall j \in J.   
\end{equation}
By the hypothesis in the theorem, $\mathscr G(\Lambda_{\mathfrak q}) \cap \mathscr H_\sigma = 1$ and therefore $p_j^s \in \mathscr H(I)$.
But this means that \[ \rk(\mathscr H_\sigma) \le \lvert I \lvert = d = \gk(M). \]  
It remains to show that $\gk(M) < n + 1$. To this end we suppose that $\gk(M) = n + 1$. 
Since $\Lambda^\ast$ is a twisted group algebra it follows by Definition \ref{Br-Gr_Defn} and Remark \ref{key_remrk_1} that $M$ embeds a copy of the right regular module $\Lambda^\ast$. As $M$ is simple hence $M \cong \Lambda^\ast$. But this means that $\Lambda^\ast$ is a division ring which is clearly not true.
Our proof is now complete.
\end{proof}

An example illustrating the theorem is in place:

\begin{example}\label{eg1}
Let $F$ be a field and let $\mathfrak q \in \mathrm{M}_n(F)$ be a multiplicatively antisymmetric matrix given by
\begin{equation*} \mathfrak q := 
\begin{pmatrix} 
 1 & 1 & 1 & q_1\\
 1& 1& 1 & q_2 \\
 1& 1& 1 & q_3 \\
q_1^{-1} & q_2^{-1} &   q_3^{-1} & 1 
\end{pmatrix}
\end{equation*}
where it is assumed that 
\begin{equation}\label{maxm_rank_Lmbda_grp}
     \rk(Q) = 3  \ \ \ \ \ \ \mbox{for} \ \ \ \ \ Q := \langle q_1, q_2, q_3 \rangle.
\end{equation}
Let $\Lambda: = \Lambda_{\mathfrak q}$ be the quantum torus algebra defined by the matrix $\mathfrak q$. Let $\sigma$ be the scalar automorphism of $\Lambda$ defined by the vector $(p_1, p_2, p_3, p_4)$ where it is assumed that \begin{equation} \label{rk_sgma}
\rk(P) = 4 \ \ \ \ \ \ \mbox{for} \ \ \ \ \ P := \langle p_1, p_2, p_3, p_4 \rangle.
\end{equation}
We also assume that $P \cap Q = 1$ (These conditions can be realized for example in the field of rational numbers $\mathbb Q$ by choosing distinct primes for the multiparameters $q_i$ and $p_j$). 
Then the quantum torus algebra $\Lambda^\ast$ in Theorem 2 is defined by the matrix $\mathfrak q^\ast$ which is given by
\begin{equation*} \mathfrak q^\ast :=  
\begin{pmatrix} 
 1 & 1 & 1 & q_1 & p_1^{-1} \\
 1& 1& 1 & q_2 & p_2^{-1} \\
 1& 1& 1 & q_3 & p_3^{-1}\\
q_1^{-1} & q_2^{-1} &  q_3^{-1} & 1 & p_4^{-1} \\
p_1 & p_2 & p_3 & p_4 & 1
\end{pmatrix}
\end{equation*}
The algebra $\Lambda$ is satisfies the hypothesis of Theorem 1. Moreover the condition (\ref{maxm_rank_Lmbda_grp}) implies that $\mathcal Z(\Lambda) = k$ (\cite[Proposition 1.3]{MP:1988}).
It now follows from Theorem 1 that $\mathscr V(\Lambda) = \{1, 3 \}$. 
Noting (\ref{rk_sgma}) Theorem 2 now gives 
\[\mathscr V(\Lambda^\ast) =  \{ 4 \}  \cup \{ 2, 4 \} = \{ 2, 4 \}.  \]
Thus for a simple $\Lambda^\ast$-module $M$, \[ \gk(M) \not \in \{1,3,5\}. \]
\end{example}


\section*{Acknowledgments}
The first author was supported by NBHM grant 2/48(14)/2015/NBHM(R.P.)/R \& D II/4147 and the second author was supported by HRI, Prayagraj (Allahabad)postdoctoral fellowship grant M170563VF.

\end{document}